\documentclass[12pt]{article}

\usepackage{amssymb}


\newcommand{\normed}[1]{\| #1 \|}


\oddsidemargin 0mm
\evensidemargin 0mm
\topmargin 6mm
\textheight = 40\baselineskip
\textwidth 160mm


\newcommand{\qed}{$\;\;\;\Box$}
\newenvironment{proof}{\par\smallbreak{\sl Proof.~}}
{\unskip\nobreak\hfill \qed \par\medbreak}

\newcounter{claim}
\renewcommand{\theclaim}{\arabic{claim}}
{\par\medskip\par}

{\qed\par\smallbreak}
\newcommand{\hide}[1]{}



\renewcommand{\H}{{\mbox{H}}}
\newcommand{\Con}{{\mbox{C}}}
\newcommand{\D}{{\cal D}}


\newcommand{\N}{{\mathbb N}}
\newcommand{\R}{{\mathbb R}}
\newcommand{\C}{{\mathbb C}}
\newcommand{\Z}{{\mathbb Z}}


\newcommand{\beq}{\begin{equation}}
\newcommand{\ee}{\end{equation}}

\renewcommand{\d}{\partial}

\newtheorem{thm}{Theorem}
\newtheorem{lemma}[thm]{Lemma}

\newcommand{\al}{\alpha}
\newcommand{\be}{\beta}
\newcommand{\ga}{\gamma}

\newcommand{\eps}{\varepsilon}
\newcommand{\vphi}{\varphi}



\newcommand{\dist}{\mathop{\rm dist}}
\setcounter{page}{1}

\title{
$C^1$-smoothness of Nemytskii operators on Sobolev-type spaces
of periodic functions
}

\newcounter{thesame}
\setcounter{thesame}{1}
\author{
I.~Kmit\thanks{supported by a Humboldt Research Fellowship}
\\
{\small
Institute for Applied Problems of Mechanics and Mathematics,}\\
{\small Ukrainian Academy of Sciences}\\
{\small Naukova St.\ 3b,
79060 Lviv,
Ukraine}
\\
{\small   E-mail:
{\tt kmit@informatik.hu-berlin.de}}
}

\date{}

\begin{document}

\maketitle

\begin{abstract}
\noindent
We consider a class of Nemytskii superposition operators that covers
the nonlinear part of traveling wave models from laser dynamics, population dynamics,
 and chemical kinetics.
Our main result is the $C^1$-continuity property of these operators over
Sobolev-type spaces of periodic functions.
\end{abstract}

\emph{Key words:} Nemytskii operators, Sobolev-type spaces of periodic functions,
$C^1$-\\smoothness

\emph{Mathematics Subject Classification: 47H99,46E30}

\section{Motivation and main result}\label{sec:intro}
Development of a bifurcation theory for hyperbolic PDEs encounters
significant difficulties caused by the fact that hyperbolic
operators have worse regularity properties in comparison to ODEs
and parabolic PDEs. Such a theory has to cover
one- and multi-parameter bifurcations  (both local and global),
stability of bifurcating solutions, and periodic
synchronizations. For hyperbolic problems all these topics currently remain
challenging research directions. In each of them, investigation of
smoothness properties of Nemytskii superposition operators plays
an important role.

Not losing potential applicability to the aforementioned topics, here we
consider Nemytskii operators in the context of the
traveling wave models from laser dynamics \cite{LiRadRe,Rad,RadWu}. The
models describe the dynamics of multisection semiconductor lasers. They
include a semilinear first-order one-dimensional hyperbolic system.

As an additional source of motivation, note that some problems of
population dynamics~\cite{Hillen,hiha,horst,Lutscher},
chemical kinetics \cite{akr,AkrBelZel,Sli,Zel1,Zel2}, and
kinetic gas dynamics \cite{conner,illner1,illner2} have the same
hyperbolic operator. Thus, our analysis applies to those problems as well,
even when they have a different type of boundary conditions.

In the case of the traveling wave models, we deal with periodic-Dirichlet problems
and our overall goal is to provide a bifurcation analysis for them. The
basic idea is to apply techniques based on the Implicit Function Theorem
in Banach spaces and the Lyapunov-Schmidt reduction
(see, e.g., \cite{ChowHale,Ki}).
The first problem to solve on this way is to establish the Fredholm solvability of
the corresponding linearized problems, what is done in \cite{KmRe1,KmRe2}.
To make the linearization procedure correct and to solve the so-called ``range''
equation (obtained after a Lyapunov-Schmidt reduction) via Implicit Function Theorem,
 we would need  appropriate smoothness properties
of the Nemytskii superposition operators with respect to the function spaces
used in \cite{KmRe1,KmRe2}. The results obtained in this paper are sufficient
to achieve this goal.

Due to the great importance of Nemytskii operators in the theory of nonlinear equations,
their smoothness properties in different function spaces were extensively studied
(see, e.g., \cite{appel}). Here we involve into consideration new function spaces
important for solving nonlinear hyperbolic PDEs.

To state our main result, let us introduce the function spaces we are working with:
For $\ga\ge 0$ we denote by $W^{\ga}$ the vector Banach space of all locally integrable functions
$u: [0,1]\times\R\to\R^n$ such that
$u(x,t)=u\left(x,t+2\pi\right)$ for almost all $x \in (0,1)$ and $t\in\R$
and that
\begin{equation}\label{eq:W}
\|u\|_{W^{\ga}}^2=\sum\limits_{s\in\Z}(1+s^2)^{\gamma}
\int\limits_0^1\left\|\int\limits_0^{2\pi}
u(x,t)e^{-ist}\,dt\right\|^2\,dx<\infty.
\end{equation}
Here and throughout $\|\cdot\|$ is the Hermitian norm in $\C^n$.
In other words, $W^\ga$  is the anisotropic Sobolev space of all  measurable functions
$u: [0,1]\times\R\to\R^n$ such that $u(x,t)=u\left(x,t+2\pi\right)$ for almost all  $x \in (0,1)$ and $t\in\R$
and that the distributional partial derivatives of $u$ with respect to $t$ up to the order $\ga$ are  locally
quadratically integrable.
Furthermore, given $a \in L^\infty\left((0,1);\R^n\right)$ with $\mbox{ess inf } |a_j|>0$
for all $j\le n$, we introduce  the function spaces
$$
V^{\gamma} = \Bigl\{u\in W^{\gamma}:\, \d_xu\in W^{\ga-1},
\left[\d_tu_j+a_j\d_xu_j\right]_{j=1}^n\in W^{\gamma}\Bigr\}
$$
endowed with the norms
\begin{equation}\label{eq:norm1}
\|u\|_{V^{\gamma}}^2=\|u\|_{W^{\gamma}}^2
+\left\|\left[\d_tu_j+a_j\d_xu_j\right]_{j=1}^n\right\|_{W^{\gamma}}^2.
\end{equation}
In the notation $V^\ga$ we drop the dependence of this space on $a$. It should be stressed that our results
hold true for each $a$.
Note that the space $V^\ga$ is larger than the space of  all $u \in W^\ga$ with
$\partial_t u \in  W^\ga$ and $\partial_x u \in  W^\ga$.

We will focus on the pair of function spaces $(V^2,W^2)$, for which we prove
our main result given by  Theorem~\ref{thm:Nemytskii}. It is important that
$V^2$ is embedded into the  algebra
of (continuous) functions with pointwise multiplication
(see Assertion $(ii)$ of Lemma~\ref{lem:embedding}
and the embedding (\ref{eq:emb2}) below). This will allow us to use
pointwise nonlinearities for the description of our Nemytskii operators.

Given a function $f(x,u): (0,1)\times\R^n\to\R$  defined for almost all
$x\in(0,1)$ and all  $y\in\R^n$, let
\beq\label{eq:F}
\left[F(u)\right](x,t)=f(x,u(x,t)).
\ee
We will show that $F$ is a $C^1$-smooth superposition operator from $V^2$ into $W^2$.

For the sake of technical simplicity and without loss of generality  we
will suppose that $n=1$.

\begin{thm}\label{thm:Nemytskii}
Suppose that  $f(\cdot,\cdot)\in\allowbreak L^\infty\left(0,1;\Con^{4}[-M,M]\right)$
for each $M>0$. Then   $F(u) \in C^1(V^2,W^2)$.
\end{thm}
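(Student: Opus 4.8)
The plan is to identify the candidate derivative, reduce $C^1$-smoothness to a multiplication (product) estimate together with a continuity statement, and then invoke a standard mean-value representation. Throughout I use that $V^2\hookrightarrow C$ by Lemma~\ref{lem:embedding}(ii) and the embedding (\ref{eq:emb2}), so that every $u\in V^2$ is bounded with $\|u\|_\infty$ controlled by $\|u\|_{V^2}$; hence, for $M=\|u\|_\infty$, the quantities $\d_y f(x,u(x,t))$, $\d_y^2f(x,u(x,t))$, $\d_y^3f(x,u(x,t))$ are all bounded in $(x,t)$ in terms of the $L^\infty\left(0,1;\Con^4[-M,M]\right)$-norm of $f$.

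First I would establish that $F$ maps $V^2$ into $W^2$ and is bounded on bounded sets. Since $W^2$ is, up to equivalent norm, the space of functions that are $L^2$ in $x$ with values in $H^2$ in the periodic variable $t$, it suffices to control $\d_t^2[f(x,u)]$ in $L^2$. The chain rule gives
\[
\d_t^2\bigl[f(x,u)\bigr]=\d_y^2f(x,u)\,(\d_t u)^2+\d_y f(x,u)\,\d_t^2 u .
\]
The second summand lies in $L^2$ because $\d_y f(x,u)$ is bounded and $\d_t^2u\in W^0=L^2$. For the first summand the point is that $(\d_t u)^2\in L^2$, i.e.\ $\d_t u\in L^4$; this is where the structure of $V^2$ (rather than merely $u\in W^2$) enters, via (\ref{eq:emb2}) and Lemma~\ref{lem:embedding}, which provide the needed control of $\d_t u$ uniformly in $x$. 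Collecting terms yields $\|F(u)\|_{W^2}\le C(\|u\|_{V^2})$ with $C$ nondecreasing.

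Next I would guess the Fr\'echet derivative to be the multiplication operator $\bigl[DF(u)v\bigr](x,t)=\d_y f\bigl(x,u(x,t)\bigr)\,v(x,t)$ and verify $DF(u)\in L(V^2,W^2)$. Writing $g:=\d_y f(x,u)$, I need the product estimate $\|g\,v\|_{W^2}\le C(\|u\|_{V^2})\,\|v\|_{V^2}$. Expanding $\d_t^2(gv)=(\d_t^2 g)v+2(\d_t g)(\d_t v)+g\,\d_t^2v$ and using $\d_t g=\d_y^2f(x,u)\,\d_t u$ and $\d_t^2 g=\d_y^3f(x,u)\,(\d_t u)^2+\d_y^2f(x,u)\,\d_t^2u$, every term is a product of a bounded factor with factors among $(\d_t u)^2,\ \d_t^2u,\ \d_t u\cdot\d_t v,\ \d_t^2 v,\ v$; each lies in $L^2$ by the same $L^4$- and $L^2$-controls (for the mixed term $\d_t u\cdot\d_t v$ one combines the $L^4$-bounds via H\"older) and by boundedness of $v$ from $V^2\hookrightarrow C$. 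This is the main obstacle: all the genuinely nonlinear information is concentrated in these multiplication estimates in the anisotropic space $W^2$, where there is no control of $x$-derivatives and the quadratic terms $(\d_t u)^2$ must be absorbed using the continuity embedding and the $L^4$-bound for $\d_t u$ supplied by $V^2$.

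Finally I would assemble the $C^1$-conclusion by a standard route. The same estimates, combined with the uniform continuity of $\d_y f$ and $\d_y^2 f$ on compact $y$-intervals (from $f\in L^\infty(0,1;\Con^4)$) and dominated convergence, show that $u\mapsto DF(u)$ is continuous from $V^2$ into $L(V^2,W^2)$: one compares the symbols $\d_yf(x,u)$ and their $t$-derivatives for nearby $u$ and bounds the differences by the modulus of continuity of the $\d_y^kf$ on the compact range swept out by the $u$'s. Continuity of $u\mapsto DF(u)$ together with the pointwise identity
\[
f(x,u+v)-f(x,u)=\int_0^1 \d_y f\bigl(x,u+sv\bigr)\,v\,ds
\]
gives, interpreting it as a $W^2$-valued Bochner integral, $F(u+v)-F(u)=\int_0^1 DF(u+sv)v\,ds$, whence $\|F(u+v)-F(u)-DF(u)v\|_{W^2}\le\sup_{0\le s\le1}\|DF(u+sv)-DF(u)\|_{L(V^2,W^2)}\,\|v\|_{V^2}=o(\|v\|_{V^2})$. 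This establishes Fr\'echet differentiability with $F'=DF$ and, by the continuity just proved, $F\in C^1(V^2,W^2)$.
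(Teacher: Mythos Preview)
Your outline is sound and close to the paper's argument, with one gap and one genuine difference worth flagging.

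The gap: you write ``the chain rule gives $\d_t^2[f(x,u)]=\d_y^2f(x,u)(\d_t u)^2+\d_y f(x,u)\,\d_t^2 u$'' as if this were automatic. Since $u$ is not smooth and $f$ is only $L^\infty$ in $x$, this identity for weak $t$-derivatives is not immediate; the paper devotes most of Lemma~\ref{lem:cont} to establishing it by approximating $u$ with $u^m\in C^\infty\cap V^2$ (the density being Lemma~\ref{lem:dense}), verifying the chain rule for each $u^m$, and passing to the limit in $L^2$ using exactly the $L^4$/$L^\infty$ controls you sketch. Your estimates are the right ones, but this approximation step is the missing ingredient. (A minor slip: the $L^4$ bound on $\d_t u$ comes from Lemma~\ref{lem:embedding}(i) together with~(\ref{eq:emb1}), applied to $\d_t u\in V^1$, not from~(\ref{eq:emb2}).)

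The difference: for Fr\'echet differentiability the paper writes the remainder as $w^2\int_0^1\int_0^1\sigma\,\d_u^2 f(x,u+\sigma\sigma_1 w)\,d\sigma\,d\sigma_1$, applies $\d_t^2$ directly, and bounds each resulting term; this is where $\d_u^4 f$ appears and the full $\Con^4$ hypothesis is consumed. Your route---first proving continuity of $u\mapsto DF(u)$ in $L(V^2,W^2)$ and then invoking the mean-value identity $F(u+v)-F(u)=\int_0^1 DF(u+sv)v\,ds$ as a $W^2$-valued Bochner integral---is a legitimate and somewhat cleaner alternative that appears to need only $\Con^3$ regularity of $f$ in $y$. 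The small price is that you should verify that the pointwise integral coincides with the Bochner integral in $W^2$, which is routine once continuity of $s\mapsto DF(u+sv)v$ into $W^2$ is in hand.
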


It should be emphasized here that, by physical reasons, the function $f$
can have discontinuities with respect to the first argument, and
the assumption of the theorem covers such cases.

Note also that
under additional regularity assumptions on $f$, we can extend Theorem~\ref{thm:Nemytskii}
to any desired smoothness
of the operator $F$ and to the  pair of spaces $(V^\ga,W^\ga)$ for any
integer $\ga\ge 2$.

\section{Properties of the used function spaces}

As usual,  by $H^1\left(0,1\right)$ we denote the Sobolev space of all functions
$u \in L^2\left(0,1\right)$ such that the weak derivative $u'$ belongs to $L^2\left(0,1\right)$.
The norm in $H^1(0,1)$ is defined by
$$
\|u\|^2_{H^1\left(0,1\right)}=\sum_{j=0}^1\int_0^1|u^{(j)}(x)|^2 dx.
$$
Similarly, by $H^1\left((0,1) \times (0,2\pi)\right)$ we denote the Sobolev space of all functions
$u \in L^2((0,1) \allowbreak \times(0,2\pi))$ such that for every multiindex $\al=(\al_1,\al_2)\in\N_0^2$
with $|\al|\le 1$, the weak partial derivative $D^\al u$ belongs to $L^2\left((0,1)\times(0,2\pi)\right)$.
The norm in $H^1\left((0,1) \times (0,2\pi)\right)$ is given by
$$
\|u\|^2_{H^1\left((0,1) \times (0,2\pi)\right)}=\sum_{|\al|\le 1}\int_0^1\int_0^{2\pi}|D^\al u(x,t)|^2\, dxdt.
$$
Moreover, by $H^1\left((0,2\pi);H^1\left(0,1\right)\right)$  we denote
the abstract Sobolev space of all locally quadratically Bochner integrable maps
$u:(0,2\pi) \to H^1\left(0,1\right)$ such that the
 distributional derivative
$u'$ is also locally quadratically Bochner integrable, with the norm
$$
\|u\|^2_{H^1\left((0,2\pi);H^1\left(0,1\right)\right)}
=\sum_{j=0}^1\int_0^{2\pi}\|u^{(j)}(t)\|_{H^1\left(0,1\right)}^2dt.
$$
Note that the space $H^1\left((0,2\pi);H^1(0,1)\right)$ is smaller than the classical Sobolev space
$H^1\left((0,1) \times (0,2\pi)\right)$, and we have the continuous embeddings
\begin{eqnarray}
\label{eq:emb1}
H^1\left((0,1) \times (0,2\pi)\right) & \hookrightarrow & L^p\left((0,1) \times (0,2\pi)\right)
\mbox{ for all } p \in [2,\infty),\\
\label{eq:emb2}
H^1\left((0,2\pi);H^1(0,1)\right) & \hookrightarrow &
C\left([0,1] \times [0,2\pi]\right),
\end{eqnarray}
see~\cite[Theorem 5.4]{AD}.

We now establish some properties of the function spaces
$V^1$ and $V^2$ introduced in Section~\ref{sec:intro},
which are needed for proving Theorem~\ref{thm:Nemytskii}.

\begin{lemma}\label{lem:embedding}
We have the following continuous embeddings:

(i) $V^1 \hookrightarrow H^1\left((0,1) \times (0,2\pi)\right)$;

(ii) $V^2 \hookrightarrow H^1\left((0,2\pi); H^1\left(0,1\right)\right)$.
\end{lemma}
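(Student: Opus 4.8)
The plan is to reduce both embeddings to elementary comparisons of weighted $\ell^2$-sums of $t$-Fourier coefficients. For $u\in W^\gamma$ write the expansion $u(x,t)=\sum_{s\in\Z}u_s(x)e^{ist}$ with $u_s(x)=\frac{1}{2\pi}\int_0^{2\pi}u(x,t)e^{-ist}\,dt$. Applying Parseval's identity in $t$ for a.e. $x$ and integrating in $x$, the norm (\ref{eq:W}) becomes, up to a fixed positive constant,
$$
\|u\|_{W^\gamma}^2\sim\sum_{s\in\Z}(1+s^2)^\gamma\|u_s\|_{L^2(0,1)}^2 .
$$
Since $\partial_t$ multiplies $u_s$ by $is$ and $\partial_x$ replaces $u_s$ by $u_s'$, and both operations commute with taking $t$-Fourier coefficients, one gets $\|\partial_t u\|_{W^\gamma}^2\sim\sum_s s^2(1+s^2)^\gamma\|u_s\|_{L^2}^2$ and $\|\partial_x u\|_{W^{\gamma-1}}^2\sim\sum_s(1+s^2)^{\gamma-1}\|u_s'\|_{L^2}^2$. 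First I would rewrite the two target norms in the same coordinates: expanding the definitions gives $\|u\|_{H^1((0,1)\times(0,2\pi))}^2\sim\sum_s(1+s^2)\|u_s\|_{L^2}^2+\sum_s\|u_s'\|_{L^2}^2$ and $\|u\|_{H^1((0,2\pi);H^1(0,1))}^2\sim\sum_s(1+s^2)\|u_s\|_{L^2}^2+\sum_s(1+s^2)\|u_s'\|_{L^2}^2$.

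The key auxiliary estimate, which carries both parts, is control of the $x$-derivative, since the norm (\ref{eq:norm1}) does not contain $\partial_x u$ explicitly. I would recover it from the identity $a\,\partial_x u=(\partial_t u+a\,\partial_x u)-\partial_t u$. Because $a=a(x)$ is independent of $t$, multiplication by $a$ or by $1/a$ acts coefficientwise on the $u_s'$ and alters each norm $\|\cdot\|_{W^\delta}$ only by the factors $\|a\|_\infty$, respectively $1/\mbox{ess inf}\,|a|$; in particular it maps $W^\delta$ into $W^\delta$. Hence for $\gamma\in\{1,2\}$
$$
\|\partial_x u\|_{W^{\gamma-1}}\le\frac{1}{\mbox{ess inf}\,|a|}\Big(\|\partial_t u+a\,\partial_x u\|_{W^{\gamma-1}}+\|\partial_t u\|_{W^{\gamma-1}}\Big)\le C\,\|u\|_{V^\gamma},
$$
where I used $\|\partial_t u+a\,\partial_x u\|_{W^{\gamma-1}}\le\|\partial_t u+a\,\partial_x u\|_{W^\gamma}$ together with the elementary bound $s^2(1+s^2)^{\gamma-1}\le(1+s^2)^\gamma$, which gives $\|\partial_t u\|_{W^{\gamma-1}}\le\|u\|_{W^\gamma}$.

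With this estimate in hand, both embeddings reduce to a termwise comparison. For (i) I take $\gamma=1$: the first sum is exactly $\|u\|_{W^1}^2$ and the second equals $\|\partial_x u\|_{W^0}^2\le C\|u\|_{V^1}^2$, so $\|u\|_{H^1((0,1)\times(0,2\pi))}\le C\|u\|_{V^1}$. For (ii) I take $\gamma=2$: the weight $(1+s^2)$ in the first sum is dominated by $(1+s^2)^2$, whence $\sum_s(1+s^2)\|u_s\|_{L^2}^2\le\|u\|_{W^2}^2$, while the second sum is precisely $\|\partial_x u\|_{W^1}^2\le C\|u\|_{V^2}^2$; therefore $\|u\|_{H^1((0,2\pi);H^1(0,1))}\le C\|u\|_{V^2}$. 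The only genuinely non-formal step, and the one I would treat most carefully, is the auxiliary estimate: it relies simultaneously on $a$ being independent of $t$ (so that multiplication by $a^{\pm1}$ does not mix Fourier modes and stays inside each $W^\delta$) and on $\mbox{ess inf}\,|a|>0$ (so that dividing by $a$ is bounded). Everything else is bookkeeping with weighted $\ell^2$-norms.
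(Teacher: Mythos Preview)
Your proof is correct and follows essentially the same approach as the paper: both recover $\partial_x u$ from the identity $a\,\partial_x u=(\partial_t u+a\,\partial_x u)-\partial_t u$, use the triangle inequality together with $\|\partial_t u\|_{W^{\gamma-1}}\le\|u\|_{W^\gamma}$ and $\|\partial_t u+a\,\partial_x u\|_{W^{\gamma-1}}\le\|\partial_t u+a\,\partial_x u\|_{W^\gamma}$, and then divide by $a$ using $\mbox{ess inf}\,|a|>0$. Your Fourier-coefficient presentation is just a more explicit rendering of the same computation (and in fact makes clearer why the step ``divide by $a$'' preserves the $W^\delta$-norm for $\delta\ge 1$, namely because $a$ is $t$-independent and hence acts diagonally on the modes).
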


\begin{proof}
Notice the continuous embedding
\beq\label{eq:e}
V^\ga\hookrightarrow W^\ga\hookrightarrow W^{\ga-1},\quad \ga\ge 1,
\ee
that is a straightforward consequence of the definitions of the spaces
$V^\ga$ and $W^\ga$.

(i) Take $u\in V^1$. Then $u\in W^1$ and, therefore,  $\d_tu \in  W^0$ with
\beq\label{eq:est1}
\|\d_tu\|_{W^0}^2\le \|u\|_{W^0}^2+\|\d_tu\|_{W^0}^2=\|u\|_{W^1}^2\le \|u\|_{V^1}^2.
\ee
Moreover, by the definition of $V^1$, we have
$\d_tu+a\d_xu\in W^1$. On the account of the embedding (\ref{eq:e}),
\beq\label{eq:est2}
\|\d_tu+a\d_xu\|_{W^0}^2\le \|\d_tu+a\d_xu\|_{W^1}^2+
\|u\|_{W^1}^2= \|u\|_{V^1}^2.
\ee
By triangle inequality
\beq\label{eq:est3}
\|a\d_xu\|_{W^0}^2-\|\d_tu\|_{W^0}^2\le \|\d_tu+a\d_xu\|_{W^0}^2.
\ee
Since
$a \in L^\infty(0,1)$ with $\mbox{ess inf } |a|>0$,
it follows by (\ref{eq:est1})--(\ref{eq:est3}), that
$$
\|\d_xu \|_{W^0}\le  c\|u\|_{V^1},
$$
where the constant $c$ does not depend on $u$.
Therefore
$$
\|u\|_{W^0}+\|\d_xu\|_{W^0}+\|\d_tu\|_{W^0}\le
(2+c)\|u\|_{V^1}.
$$
To finish  the proof of this part, it remains to note that $W^0=L^2\left((0,1) \times (0,2\pi)\right)$.

(ii) We proceed similarly: Take $u\in V^2$. Then $u\in W^2$, and we have $u$ as well as  $\d_tu$ and $\d_xu$ in $W^1$.
Moreover, $\|u\|_{W^1}\le \|u\|_{W^2}\le\|u\|_{V^2}$ and $\|\d_tu\|_{W^1}\le \|u\|_{W^2}\le\|u\|_{V^2}$.
This implies that
$\|\d_xu\|_{W^1}\le c\|u\|_{V^2}$, where the constant $c$ does not depend on $u$. Claim (ii) readily follows
from these estimates.
\end{proof}

The following fact is similar to the density result for Sobolev spaces (see~\cite[Section III]{AD})
and proved by the same method.

\begin{lemma}\label{lem:dense}
The subspace $C^\infty\cap V^2$ is dense in $V^2$.
\end{lemma}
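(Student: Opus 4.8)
The plan is to prove density of $C^\infty \cap V^2$ in $V^2$ by a combination of mollification in the periodic time variable $t$ and approximation in the spatial variable $x$, exploiting the Fourier-series structure that underlies the definition of the spaces $W^\gamma$ in (\ref{eq:W}). Since the norm on $W^\gamma$ is built from the Fourier coefficients $\hat u(x,s) = \int_0^{2\pi} u(x,t)e^{-ist}\,dt$ weighted by $(1+s^2)^\gamma$, the natural first step is to approximate in the $t$-direction by convolving with a smooth $2\pi$-periodic mollifier $\rho_\eps(t)$. On the Fourier side this replaces $\hat u(x,s)$ by $\hat\rho_\eps(s)\,\hat u(x,s)$, and because $|\hat\rho_\eps(s)|\le 1$ with $\hat\rho_\eps(s)\to 1$ as $\eps\to 0$ for each fixed $s$, dominated convergence in the weighted $\ell^2$-sum shows $u*\rho_\eps \to u$ in $W^\gamma$. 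The key observation is that this convolution commutes with $\d_t$, with $\d_x$, and with multiplication by $a(x)$ (which acts only in $x$), so it also sends the combination $\d_t u + a\,\d_x u$ to its own mollification and hence converges in $V^2$ as well. After this step one may assume $u$ is smooth in $t$.

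First I would therefore reduce to approximating in the $x$-variable a function already smooth (indeed real-analytic after a further Fourier truncation) in $t$. One may additionally truncate the Fourier series in $s$ to a finite sum $\sum_{|s|\le N}\hat u(x,s)e^{ist}$; by the same weighted-$\ell^2$ argument this truncation converges in $V^2$, and it reduces the problem to finitely many $x$-dependent coefficient functions $\hat u(\cdot,s)\in L^2(0,1)$. The regularity encoded in the $V^2$-norm, via Lemma~\ref{lem:embedding}, tells us that these coefficient functions and their combinations with $a(x)$ have the spatial regularity of $H^1(0,1)$; so the remaining task is to approximate each coefficient, together with the constraint coming from the directional derivative term, by a smooth function of $x$ on $[0,1]$.

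The main obstacle is that $V^2$ controls $\d_x u$ only through the characteristic combination $\d_t u + a\,\d_x u$, together with the weaker bound $\d_x u\in W^{\gamma-1}$, rather than controlling $\d_x u$ directly at the full $W^\gamma$ level; moreover $a$ is merely $L^\infty$ and may be discontinuous. Consequently a naive $x$-mollification need not converge in the $V^2$-norm, because commuting an $x$-mollifier past the multiplication by the rough coefficient $a(x)$ produces a commutator term that does not obviously vanish. The way around this is to mollify the \emph{characteristic data} rather than $u$ itself: introduce $g := \d_t u + a\,\d_x u \in W^2$ and treat the pair $(u,g)$, noting that $u$ is recovered from $g$ by integrating along the characteristics $\dot x = a$, so that smoothing $g$ and $u|_{\text{boundary}}$ and then solving the transport relation yields a smooth approximant whose $V^2$-norm is controlled. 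This is precisely the ``same method'' as in the Sobolev density result of \cite[Section III]{AD} alluded to in the statement, and I would carry it out by the standard partition-of-unity-plus-translation-plus-mollification scheme adapted to the half-open interval $[0,1]$.

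Finally I would assemble the estimates: given $u\in V^2$ and $\delta>0$, choose the $t$-mollification parameter, the Fourier truncation level $N$, and the $x$-approximation parameter in turn so that each successive approximant lies within $\delta/3$ of the previous one in the $V^2$-norm, producing a smooth element of $V^2$ within $\delta$ of $u$. The only point requiring care is uniformity of the constants across these three steps, which follows because each step is a bounded operation on $V^2$ with norm bounded independently of the others, the essential lower bound $\operatorname{ess\,inf}|a|>0$ guaranteeing that the characteristics foliate $[0,1]\times[0,2\pi]$ and that the reconstruction of $u$ from $g$ is well posed and norm-controlled.
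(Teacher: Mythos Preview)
Your approach differs substantially from the paper's. The paper mollifies simultaneously in both variables $(x,t)$ on $\Pi=(0,1)\times(0,2\pi)$, establishes the distributional identity $\d_t^\al(v_\eps)=(\d_t^\al v)_\eps$ on interior subdomains for $v=\d_tu+a\d_xu$, deduces local convergence $u_\eps\to u$ in $V^2$, and then globalizes via a partition of unity on an exhaustion $\Pi_j\nearrow\Pi$---exactly the classical Sobolev density scheme of \cite[Section~III]{AD}. There is no separation of the $t$- and $x$-directions, and the Fourier representation (\ref{eq:W}) plays no role.

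Your route---periodic $t$-mollification, then Fourier truncation, then $x$-approximation---has the genuine virtue that the $t$-operations commute exactly with multiplication by $a(x)$, so the commutator difficulty you flag never arises. However, Step~3 as you describe it has a gap: reconstructing $u$ from smoothed characteristic data $g=\d_tu+a\d_xu$ by integrating along $\dot x=a$ will \emph{not} produce a $C^\infty$ approximant when $a$ is merely $L^\infty$, because the characteristic change of variables is then only Lipschitz, not smooth. A far simpler Step~3 closes the argument: after Fourier truncation each coefficient $\hat u(\cdot,s)$ lies in $H^1(0,1)$ (this is precisely the content of the ``weaker'' bound $\d_xu\in W^{1}$, recovered from $a\d_xu=(\d_tu+a\d_xu)-\d_tu$ together with $\mathrm{ess\,inf}\,|a|>0$), and approximating each of the finitely many coefficients in $H^1(0,1)$ by smooth functions yields $V^2$-convergence of the finite sum via the elementary estimate $\|is\,w+aw'\|_{L^2}\le |s|\,\|w\|_{L^2}+\|a\|_{L^\infty}\|w'\|_{L^2}$. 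With that replacement your argument is complete.
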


\begin{proof}
Set $\Pi=(0,1)\times(0,2\pi)$.
By periodicity, speaking of a function in $V^2$, we can assume its restriction
to $\overline\Pi$. We will use this convention in the course of the proof
of the lemma.

Let $\vphi$ be a non-negative $C^\infty(\R^2)$-function that
vanishes outside a unit disk and satisfies the condition
$\int\vphi(x)\,dx=1$. Take $u\in V^2$
and consider its regularization defined by
$$
u_\eps(x,t)=\frac{1}{\eps^2}\int_\Pi
u(\xi,\tau)\vphi\left(\frac{x-\xi}{\eps},
\frac{t-\tau}{\eps}\right)\,d\xi d\tau
$$
for $\eps<\dist\left((x,t),\d\Pi\right)$. Due to the properties of
the convolutions, for any strict subdomain $\Pi^\prime\subset\Pi$
it holds $\d_t^\al u_\eps\to \d_t^\al u$ and
$\d_t^\be\left[\d_x u_\eps\right]\to  \d_t^\be\left[\d_x u\right]$
in $L^2\left(\Pi^\prime\right)$ as $\eps\to 0$ for $\al=0,1,2$ and $\be=0,1$
(see \cite[Section III]{AD} for details). This implies, in particular,
that $v_\eps\to v$ in $L^2\left(\Pi^\prime\right)$ as
$\eps\to 0$, where $v=\d_tu+a(x)\d_xu$ and
$v_\eps=\d_tu_\eps+a(x)\d_xu_\eps$.
Now we intend to prove
that $u_\eps\to u$ in $V^2$ on $\Pi^\prime$ as $\eps\to 0$. It
suffices to show that $\d_t^\al v_\eps\to \d_t^\al v$ in $L^2\left(\Pi^\prime\right)$ as
$\eps\to 0$ for $\al=1,2$.  Fix
$\eps_0<\dist\left(\Pi^\prime,\d\Pi\right)$ and consider
$\eps<\eps_0$. Then for any $\psi\in C_0^\infty\left(\Pi^\prime\right)$ we have
\begin{eqnarray*}
\lefteqn{
\int_{\Pi^\prime}\left(\d_tu_\eps(x,t)+a(x)\d_xu_\eps(x,t)\right)
\d_t^\al\psi(x,t)\,dxdt}\\&&
=\frac{1}{\eps^2}\int_{\R^2}\int_{\R^2}
\left[\d_tu+a\d_xu\right](x-\xi,t-\tau)
\vphi\left(\frac{\xi}{\eps},\frac{\tau}{\eps}\right)\d_t^\al\psi(x,t)\,d\xi
d\tau dxdt\\&& =\frac{(-1)^\al}{\eps^2}\int_{\R^2}\int_{\R^2}
\d_t^\al\left[\d_tu+a\d_xu\right](x-\xi,t-\tau)
\vphi\left(\frac{\xi}{\eps},\frac{\tau}{\eps}\right)\psi(x,t)\,d\xi
d\tau dxdt
\\&& =(-1)^\al\int_{\Pi^\prime}
\d_t^\al\left(v\right)_\eps(x,t) \psi(x,t)\,dxdt.
\end{eqnarray*}
Therefore, $\d_t^\al\left(v_\eps\right)(x,t)=\left(\d_t^\al
v\right)_\eps(x,t)$ in the sense of distributions on $\Pi^\prime$.
Since $\d_t^\al v\in L^2(\Pi)$ for $\al=1,2$,
$$
\lim\limits_{\eps\to 0}\left\|\d_t^\al v_\eps-\d_t^\al
v\right\|_{L^2\left(\Pi^\prime\right)}= \lim\limits_{\eps\to
0}\left\|\left(\d_t^\al v\right)_\eps-\d_t^\al
v\right\|_{L^2\left(\Pi^\prime\right)}=0,
$$
as desired.

Consider now the following locally finite open covering of $\Pi$:
\begin{eqnarray*}
\Pi_1&=&\left\{(x,t)\in\Pi\,:\,\dist\left((x,t),\d\Pi\right)>\frac{1}{2}\right\},\\
\Pi_j&=&\left\{(x,t)\in\Pi\,:\,\frac{1}{j+1}<\dist\left((x,t),\d\Pi\right)<\frac{1}{j-1}\right\},
\quad j\ge 2.
\end{eqnarray*}
Let $\eta_1,\eta_2,\dots$ be a partition of unity subordinate to
the covering $\left\{\Pi_{j+1}\setminus\Pi_{j-1}\right\}$. Then,
given $j\ge 1$, the product $\eta_ju$ is in $V^2$ and has
support contained in $\Pi_j$. Consider now the mollification
$(\eta_ju)_\eps$. Given $\eps_0>0$, we can choose
a sequence $\eps_j$ such that
$$
\eps_j<\dist\left(\Pi_{j+1},\d\Pi_{j+3}\right) \mbox{ and }
\|(\eta_ju)_{\eps_j}-\eta_ju\|_{V^2}\le\frac{\eps_0}{2^{j+1}}.
$$
Let $w=\sum_{j=1}^\infty(\eta_ju)_{\eps_j}$. It follows from the
definition of the partition of unity that at each $x\in\Pi$ only
finitely many terms in the sum are nonzero. Since each term is
smooth, this implies $w\in C^\infty(\Pi)$. Moreover, using the
triangle inequality, we have
$$
\|w-u\|_{V_n^2}\le\sum_{j=1}^{n+2}
\|(\eta_ju)_{\eps_j}-\eta_ju\|_{V_n^2}\le\sum_{j=1}^\infty\eps_02^{-j}=\eps_0,
$$
where $\|\cdot\|_{V_n^2}$ is defined by (\ref{eq:norm1}) with the
integral over
$$
\Pi_{1/n}=\left\{(x,t)\in\Pi\,:\,\dist\left((x,t),\d\Pi\right)>\frac{1}{n}\right\}
$$
in place of the integral over $\Pi$.
This yields
$$
\|w-u\|_{V^2}=\sup\limits_{n\ge 1}\|w-u\|_{V_n^2}\le\eps_0.
$$
Since $\eps_0>0$ is arbitrary, the set
$\sum_{j=1}^n(\eta_ju)_{\eps_j}$, $n\ge 3$, is the desired dense set from $C^\infty\cap V^2$.

\end{proof}

\section{$\Con^1$-smoothness of the Nemytskii operator from
$V^2$ into $W^2$ (proof of Theorem~\ref{thm:Nemytskii})}

We split the proof into two lemmas.

\begin{lemma}\label{lem:cont}
The superposition operator $F$
given by the formula~(\ref{eq:F}) maps $V^2$ into $W^2$.
\end{lemma}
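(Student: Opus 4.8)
The plan is to verify directly that $v:=F(u)$ satisfies $v,\d_tv,\d_t^2v\in L^2(\Pi)$, since the $W^2$-norm (\ref{eq:W}) is equivalent to $\left(\|v\|_{L^2(\Pi)}^2+\|\d_tv\|_{L^2(\Pi)}^2+\|\d_t^2v\|_{L^2(\Pi)}^2\right)^{1/2}$: indeed $\widehat{\d_t^jv}_s=(is)^j\hat v_s$, so each $\|\d_t^jv\|_{L^2(\Pi)}^2$ is a fixed multiple of $\sum_{s\in\Z}s^{2j}\int_0^1|\hat v_s(x)|^2\,dx$, and $\sum_{j=0}^2 s^{2j}$ is comparable to $(1+s^2)^2$. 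The first preliminary step is to control $u$ in the sup-norm: by Assertion $(ii)$ of Lemma~\ref{lem:embedding} together with the embedding (\ref{eq:emb2}), $u\in C(\overline\Pi)$ with $\|u\|_{C(\overline\Pi)}\le c\|u\|_{V^2}$, so $|u(x,t)|\le M$ on $\overline\Pi$ for $M=c\|u\|_{V^2}$. The hypothesis $f\in L^\infty(0,1;\Con^4[-M,M])$ then furnishes a constant $C_M$ with $|f(x,y)|,|\d_yf(x,y)|,|\d_y^2f(x,y)|\le C_M$ for almost all $x\in(0,1)$ and all $|y|\le M$.

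Next I would establish the chain-rule formulas for $\d_tv$ and $\d_t^2v$. Since $u\in W^2$, the definition (\ref{eq:W}) gives $\int_0^1\|u(x,\cdot)\|_{H^2(0,2\pi)}^2\,dx<\infty$, so for almost every fixed $x$ the fibre $t\mapsto u(x,t)$ lies in $H^2(0,2\pi)$ and hence, by the one-dimensional Sobolev embedding, in $C^1$. Applying the classical chain rule fibrewise to $t\mapsto f(x,u(x,t))$ — legitimate because $f(x,\cdot)\in C^4$ and the discontinuities of $f$ in $x$ are irrelevant when differentiating only in $t$ — yields, for almost all $(x,t)$,
\[
\d_tv=\d_yf(x,u)\,\d_tu,\qquad
\d_t^2v=\d_y^2f(x,u)\,(\d_tu)^2+\d_yf(x,u)\,\d_t^2u .
\]

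It then remains to bound the three resulting expressions in $L^2(\Pi)$. The term $v=f(x,u)$ is bounded by $C_M$, hence in $L^2(\Pi)$; the term $\d_yf(x,u)\,\d_tu$ is estimated by $C_M\|\d_tu\|_{L^2(\Pi)}\le C_M\|u\|_{W^2}$, and likewise $\d_yf(x,u)\,\d_t^2u$ by $C_M\|\d_t^2u\|_{L^2(\Pi)}\le C_M\|u\|_{W^2}$, using that $\d_tu,\d_t^2u\in W^0=L^2(\Pi)$ for $u\in W^2$. The only genuinely nonlinear contribution is the quadratic term $\d_y^2f(x,u)\,(\d_tu)^2$, and here I expect the main obstacle to lie: controlling it in $L^2(\Pi)$ requires $\d_tu\in L^4(\Pi)$, which is not immediate from the anisotropic structure of $W^2$ alone.

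To obtain this $L^4$-bound I would show that $\d_tu$ belongs to the full isotropic space $H^1(\Pi)$ and then invoke the embedding (\ref{eq:emb1}) with $p=4$. Writing $w=\d_tu$, one has $w\in L^2(\Pi)$ and $\d_tw=\d_t^2u\in L^2(\Pi)$ from $u\in W^2$, while $\d_xw=\d_x\d_tu\in L^2(\Pi)$ follows from Assertion $(ii)$ of Lemma~\ref{lem:embedding}, since $V^2\hookrightarrow H^1((0,2\pi);H^1(0,1))$ places $\d_tu$ in $L^2\left((0,2\pi);H^1(0,1)\right)$. Hence $\d_tu\in H^1(\Pi)$ with $\|\d_tu\|_{H^1(\Pi)}\le c\|u\|_{V^2}$, and (\ref{eq:emb1}) gives $\|\d_tu\|_{L^4(\Pi)}\le c\|u\|_{V^2}$. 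Consequently $\|\d_y^2f(x,u)\,(\d_tu)^2\|_{L^2(\Pi)}\le C_M\|\d_tu\|_{L^4(\Pi)}^2\le cC_M\|u\|_{V^2}^2$. Collecting the three estimates shows $v,\d_tv,\d_t^2v\in L^2(\Pi)$, whence $F(u)\in W^2$, which completes the proof.
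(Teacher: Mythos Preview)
Your proof is correct and reaches the same endpoint as the paper's: one shows the weak-derivative identities $\d_tF(u)=\d_yf(x,u)\,\d_tu$ and $\d_t^2F(u)=\d_y^2f(x,u)\,(\d_tu)^2+\d_yf(x,u)\,\d_t^2u$, and then bounds each summand in $L^2(\Pi)$, with the only nontrivial piece being $(\d_tu)^2$, handled in both arguments via $\d_tu\in H^1(\Pi)\hookrightarrow L^4(\Pi)$ through Lemma~\ref{lem:embedding} and (\ref{eq:emb1}).

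The genuine difference is in how the chain-rule identities are justified. The paper approximates $u$ by a sequence $u^m\in C^\infty\cap V^2$ (Lemma~\ref{lem:dense}), writes the chain rule for each smooth $u^m$, and passes to the limit in $L^2$ to identify the weak $t$-derivatives of $F(u)$; this is what forces the convergence statements (\ref{eq:n6}) and (\ref{eq:n9}) and the accompanying mixed-norm estimates. You instead work fibrewise: for a.e.\ $x$ the slice $u(x,\cdot)$ lies in $H^2(0,2\pi)\hookrightarrow C^1$, so the first $t$-derivative is classical, and the second follows from the Sobolev product rule applied to $\d_yf(x,u(x,\cdot))\in C^1$ times $\d_tu(x,\cdot)\in H^1$; a Fubini argument then upgrades the fibrewise weak derivatives to distributional ones on $\Pi$. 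Your route is shorter and avoids Lemma~\ref{lem:dense} entirely, at the minor cost that the second-derivative step is not literally the ``classical chain rule'' but the (routine) $W^{1,\infty}\cdot H^1$ product rule. The paper's approximation approach is heavier here, but it also sets up exactly the convergence estimates that are reused verbatim in Lemma~\ref{lem:C1} when proving continuity and Fr\'echet differentiability of $F$.
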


\begin{proof}
For any function $u\in V^2$, denote by
$F'(u)$ and $F''(u)$ the superposition operators by putting, for
almost all $x\in(0,1)$,
$$
\left[F'(u)\right](x,t)=\left(\d_uf\right)(x,u(x,t)),
$$
$$
\left[F''(u)\right](x,t)=\left(\d^2_{u}f\right)(x,u(x,t)).
$$
 As $V^2\hookrightarrow C\left([0,1]\times[0,2\pi]\right)$ continuously
(see Lemma~\ref{lem:embedding} (ii) and the embedding (\ref{eq:emb2})),
we can identify any $u\in V^2$  with a
uniformly continuous and $2\pi$-periodic in $t$ function on $[0,1]\times\R$.
Furthermore, we have the inequality
\beq\label{eq:C0}
\|u\|_{C\left([0,1]\times[0,2\pi]\right)}\le C_0\|u\|_{V^2}\quad\mbox{ for all }\quad
u\in V^2,
\ee
the constant $C_0$ being independent of $u$. Combining this with the smoothness
assumptions on $f$, we conclude that, given
$u\in V^2$, the functions $[F(u)](x,t)$, $[F'(u)](x,t)$, and $[F''(u)](x,t)$
belong to $L^\infty\left((0,1)\times(0,2\pi)\right)$.

{\it Claim 1.  $F(u)$ maps $V^2$ into $W^1$.}
 Fix an arbitrary $u\in V^2$,  set
\begin{equation}\label{eq:K}
K=\normed{u}_{C([0,1]\times[0,2\pi])},
\end{equation}
and consider $(u^m)_{m\in\Z}$ to be a sequence in $C^\infty\cap V^2$
converging to $u$ in $V^2$. By (\ref{eq:C0}), we have this convergence also  in
$C\left([0,1]\times[0,2\pi]\right)$. For almost all $x\in(0,1)$ and all $t\in\R$ we have
\beq\label{eq:n5}
[\d_tF(u^m)](x,t)=[F'(u^m)](x,t)\d_tu^m(x,t).
\ee
Let us show that
\beq\label{eq:n6}
F'(u^m)\d_tu^m\to F'(u)\d_tu
\mbox{ in } L^2\left((0,1)\times(0,2\pi)\right) \mbox{ as } m\to\infty.
\ee
Indeed,
\begin{eqnarray}
\label{eq:Du}
\lefteqn{
\int\limits_0^1\int\limits_0^{2\pi}\left|F'(u^m)\d_tu^m-
F'(u)\d_tu\right|^2\,dx\,dt}
\nonumber\\
&&\le 2\int\limits_0^1\int\limits_0^{2\pi}\left|F'(u^m)-F'(u)
\right|^2
\left|\d_tu^m\right|^2\,dx\,dt
\nonumber\\&&
+2\int\limits_0^1\int\limits_0^{2\pi}\left|F'(u)\right|^2
\left|\d_tu^m-\d_tu\right|^2\,dx\,dt
\\&&
\le 2\int\limits_0^1\int\limits_0^{2\pi}\left|
\int\limits_0^1(\d_u^2f)(x,\sigma u^m+(1-\sigma)u)\, d\sigma\right|^2
\left|u^m-u\right|^2
\left|\d_tu^m\right|^2\,dx\,dt\nonumber\\
&&+2\int\limits_0^1\int\limits_0^{2\pi}\left|(\d_uf)(x,u)\right|^2
\left|\d_tu^m-\d_tu\right|^2\,dx\,dt\nonumber\\
&&
\le 2\left\|u^m-u\right\|_{C([0,1]\times[0,2\pi])}^2
\left\|
\d_u^2f\right\|_{L^\infty\left((0,1)\times
\left(-3K;3K\right)\right)}^2
\left\|\d_tu^m\right\|_{W^0}^2\nonumber\\
&&+2\left\|\d_uf\right\|_{L^\infty\left((0,1)\times
\left(-K;K\right)\right)}^2
\left\|\d_tu^m-\d_tu\right\|_{W^0}^2.
\end{eqnarray}
The latter inequality is true for all sufficiently large $m\in\N$.
Since $(u^m)_{m\in\N}$ converges to $u$ in $V^2$ and
$
V^2\hookrightarrow L^2(0,1;\H_{2\pi}^1(0,2\pi)),
$
the sequence $(\d_tu^m)_{m\in\N}$ is bounded in $L^2\left((0,1)\times(0,2\pi)\right)$ and
 converges to $\d_tu$ in  $L^2\left((0,1)\times(0,2\pi)\right)$. This shows the convergence
(\ref{eq:n6}). It follows by
H\"older's inequality that for any $\vphi\in\D\left((0,1)\times(0,2\pi)\right)$
\begin{eqnarray}
\label{eq:n7}
\lefteqn{
\int\limits_0^1\int\limits_0^{2\pi}\left(F(u)\d_t\vphi+F'(u)\d_tu\vphi\right)\,dxdt}\\
&&=\lim\limits_{m\to\infty}\left[
\int\limits_0^1\int\limits_0^{2\pi}\left(F(u^m)\d_t\vphi+F'(u^m)\d_tu^m\vphi\right)\,dxdt\right].\nonumber
\end{eqnarray}
By~(\ref{eq:n5}),
the expression under the limit sign is equal to zero.
Hence (\ref{eq:n7}) implies
$$
\int\limits_0^1\int\limits_0^{2\pi}\left(F(u)\d_t\vphi+F'(u)\d_tu\vphi\right)
\,dxdt=0
$$
for any $\vphi\in\D\left((0,1)\times(0,2\pi)\right)$.
This means that $F(u)$ has a weak partial derivative in $t$ given by the formula
$$
\d_tF(u)=F'(u)\d_tu.
$$
Recall that $[F'(u)](x,t)\in L^\infty\left((0,1)\times(0,2\pi)\right)$ and
$\d_tu\in L^2\left((0,1)\times(0,2\pi)\right)$. It is immediate that
$[\d_tF(u)](x,t)\in L^2\left((0,1)\times(0,2\pi)\right)$
and therefore $[F(u)](x,t)\in W^1$.
Since $u\in V^2$ is arbitrary, the desired assertion is therewith proved.

{\it Claim 2.  $F(u)$ maps $V^2$ into $W^2$.}
As above, fix an arbitrary $u\in V^2$ and choose $(u^m)_{m\in\Z}$ as in Claim~1.
Similarly to the proof of Claim~1, one can show the convergence
\beq\label{eq:n9}
\begin{array}{cc}
F''(u^m)\left(\d_tu^m\right)^2+F'(u^m)\d_t^2u^m
\to
F''(u)\left(\d_tu\right)^2+F'(u)\d_t^2u\\
\mbox{ in } L^2\left((0,1)\times(0,2\pi)\right)
\mbox{ as } m\to\infty
\end{array}
\ee
and that
\beq\label{eq::11primed}
\d_t^2F(u)=F''(u)\left(\d_tu\right)^2+F'(u)\d_t^2u.
\ee
The only difference appearing here concerns the estimation of the following integral:
\begin{eqnarray}
\label{eq:D2u}
\lefteqn{
\int\limits_0^1\int\limits_0^{2\pi}\left|F''(u^m)\left(\d_tu^m\right)^2
-F''(u)\left(\d_tu\right)^2\right|^2\,dx\,dt}\nonumber\\
&&\le 2\int\limits_0^1\int\limits_0^{2\pi}\left|(\d_u^2f)(x,u^m)-
(\d_u^2f)(x,u)\right|^2\left|\d_tu^m\right|^4\,dx\,dt\nonumber\\
&&+2\int\limits_0^1\int\limits_0^{2\pi}\left|(\d_u^2f)(x,u)\right|^2
\left|\left(\d_tu^m\right)^2-\left(\d_tu\right)^2\right|^2\,dx\,dt\nonumber\\
&&\le 2\int\limits_0^1\int\limits_0^{2\pi}\left|\int\limits_0^1
(\d_u^3 f)
\left(x,\sigma u^m+(1-\sigma)u\right)\,d\sigma\right|^2
\left|u^m-u\right|^2\left|\d_tu^m\right|^4\,dx\,dt\nonumber\\
&&+2\left\|
\d_u^2f\right\|_{L^\infty\left((0,1)\times
\left(-K;K\right)\right)}^2\nonumber\\
&&
\times\int\limits_0^1
\left\|\d_tu^m(x,\cdot)-\d_tu(x,\cdot)\right\|_{L^\infty(0,2\pi)}^2\,dx
\int\limits_0^{2\pi}\left\|\d_tu^m(\cdot,t)+\d_tu(\cdot,t)\right\|_{L^\infty(0,1)}^2\,dt
\nonumber\\
&&
\le
2\left\|\d_u^3f\right\|_{L^\infty\left((0,1)\times
\left(-3K;3K\right)\right)}^2
\left\|u^m-u\right\|_{C}^2\left\|\d_tu^m\right\|_{L^4}^4\nonumber\\
&&+2\left\|
\d_u^2f\right\|_{L^\infty\left((0,1)\times
\left(-K;K\right)\right)}^2\nonumber
\int\limits_0^1
\left\|\d_tu^m(x,\cdot)-\d_tu(x,\cdot)\right\|_{L^\infty(0,2\pi)}^2\,dx\\
&&\times
\int\limits_0^{2\pi}\left\|\d_tu^m(\cdot,t)+\d_tu(\cdot,t)\right\|_{L^\infty(0,1)}^2\,dt,
\end{eqnarray}
where the constant $K$ is defined by the formula (\ref{eq:K}).
The right hand side tends to zero by Lemma~\ref{lem:embedding},
 the embedding (\ref{eq:emb1}),  and the embedding
$$
V^2\hookrightarrow W^2\hookrightarrow L^2\left(
0,1;C^1[0,2\pi]\right).
$$
Turning back to (\ref{eq::11primed}), we obtain
$[\d_t^2F(u)](x,t)\in L^2\left((0,1)\times(0,2\pi)\right)$.
Hence $[F(u)](x,t)\in W^2$ as desired.
\end{proof}

\begin{lemma}\label{lem:C1}
The mapping $u\in V^2\to F(u)\in W^2$  is $C^1$-smooth and
for all $u,v\in V^2$ it holds
\beq\label{eq:n20}
 \left[F'(u)v\right](x,t)=(\d_uf)(x,u(x,t))v(x,t).
\ee
\end{lemma}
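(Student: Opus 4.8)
The plan is to establish $C^1$-smoothness by showing that the candidate derivative $F'(u)$ given by~(\ref{eq:n20}) is the Fréchet derivative of $F$ at each $u\in V^2$, and that $u\mapsto F'(u)$ is continuous as a map from $V^2$ into $\mathcal{L}(V^2,W^2)$. Lemma~\ref{lem:cont} already guarantees $F$ maps $V^2$ into $W^2$; here I must control the \emph{difference quotient} in the $W^2$-norm, which — because of~(\ref{eq:W}) — amounts to controlling $F(u+v)-F(u)-F'(u)v$ together with its $\d_t$- and $\d_t^2$-derivatives in $L^2(\Pi)$, where $\Pi=(0,1)\times(0,2\pi)$.

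First I would fix $u\in V^2$, set $K$ as in~(\ref{eq:K}), and for $v\in V^2$ small (so that $u+v$ stays in the range $[-2K,2K]$ by~(\ref{eq:C0})) write the first-order Taylor remainder pointwise:
$$
[F(u+v)-F(u)-F'(u)v](x,t)=\left(\int_0^1(\d_u^2 f)(x,u+\sigma v)(1-\sigma)\,d\sigma\right)v(x,t)^2.
$$
Estimating this in $W^0=L^2(\Pi)$ is immediate using $\|v\|_C\le C_0\|v\|_{V^2}$ and the $L^\infty$-bound on $\d_u^2 f$, giving an $O(\|v\|_{V^2}^2)$ term. The substantive work is the $\d_t$ and $\d_t^2$ derivatives of this remainder. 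Differentiating under the integral (justified, as in Lemma~\ref{lem:cont}, by the density of $C^\infty\cap V^2$ from Lemma~\ref{lem:dense} and passage to the limit in the weak formulation) produces finitely many terms, each a product of derivatives of $f$ evaluated along the segment $u+\sigma v$ against monomials in $v$, $\d_t v$, $\d_t^2 v$ and in $\d_t u$, $\d_t^2 u$. The worst terms contain either $v\,(\d_t v)^2$-type products or $v^2\,\d_t^2 u$-type products.

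The main obstacle will be exactly these quadratic-in-$v$ terms involving the highest derivatives: I must show each is $o(\|v\|_{V^2})$ in $L^2(\Pi)$, and the factors $\d_t v$, $\d_t^2 v$ are only controlled in $L^2$, not $L^\infty$. The key is the \emph{same} splitting used in~(\ref{eq:D2u}): I place the rough factors $\d_t u$, $\d_t^2 u$ (fixed data) into $L^2$ via $V^2\hookrightarrow L^2(0,1;C^1[0,2\pi])$ and $V^2\hookrightarrow H^1((0,2\pi);H^1(0,1))$ from Lemma~\ref{lem:embedding}(ii) and~(\ref{eq:emb2}), while the $v$-factors are absorbed using $\|v\|_C\le C_0\|v\|_{V^2}$ for the undifferentiated copy of $v$ and the embedding~(\ref{eq:emb1}) to bound one copy of $\d_t v$ in $L^4(\Pi)$. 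Concretely, a term like $(\d_u^2 f)\,v\,\d_t^2 u$ is bounded by $\|\d_u^2 f\|_{L^\infty}\|v\|_C\|\d_t^2 u\|_{W^0}=O(\|v\|_{V^2}^2)$, and a term like $(\d_u^3 f)\,v^2(\d_t u)(\d_t v)$ by $\|v\|_C^2\,\|\d_t u\|_{L^4}\,\|\d_t v\|_{L^4}=O(\|v\|_{V^2}^3)$; every remaining term is handled by the same bookkeeping. This yields $\|F(u+v)-F(u)-F'(u)v\|_{W^2}=o(\|v\|_{V^2})$, so $F'(u)$ is the Fréchet derivative.

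Finally I would prove continuity of $u\mapsto F'(u)$. Fixing $u$ and a perturbation $w\to 0$ in $V^2$, I estimate $\|(F'(u+w)-F'(u))v\|_{W^2}$ uniformly over $\|v\|_{V^2}\le 1$. Since $[F'(u)v](x,t)=(\d_u f)(x,u)\,v$, the difference and its $t$-derivatives (again via~(\ref{eq::11primed})-type product-rule identities) are sums of terms in which a \emph{difference} of derivatives of $f$ along $u+\theta w$ multiplies products of $\d_t u$, $\d_t^2 u$, $v$, $\d_t v$, $\d_t^2 v$. The continuity of $\d_u f,\d_u^2 f,\d_u^3 f$ in their second argument (from $f(\cdot,\cdot)\in L^\infty(0,1;C^4[-M,M])$) together with $\|w\|_C\to 0$ forces each coefficient difference to $0$ in $L^\infty$, while the remaining factors are bounded exactly as above; uniform integrability over $\Pi$ then gives $\|F'(u+w)-F'(u)\|_{\mathcal{L}(V^2,W^2)}\to 0$. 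This establishes $F\in C^1(V^2,W^2)$ and completes the proof of Theorem~\ref{thm:Nemytskii}.
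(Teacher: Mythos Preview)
Your proposal is correct and follows essentially the same route as the paper: both write the first-order Taylor remainder in integral form, pass to the $\d_t^2$-level (the paper works with the equivalent norm $\|w\|_{W^2}=\|\d_t^2 w\|_{W^0}$), and control the resulting product terms via Lemma~\ref{lem:embedding} together with the embeddings~(\ref{eq:emb1})--(\ref{eq:emb2}); your separate verification that $u\mapsto F'(u)$ is continuous is, if anything, more explicit than the paper's. One small slip: your illustrative term ``$(\d_u^2 f)\,v\,\d_t^2 u$'' with bound $\|v\|_C\|\d_t^2 u\|_{W^0}$ is only $O(\|v\|_{V^2})$ --- the actual remainder terms carry either $v^2\,\d_t^2 u$ or $v\,\d_t^2 v$, and those do give the $O(\|v\|_{V^2}^2)$ you claim.
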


\begin{proof}
We now prefer to work with  the following norm in $W^2$:
\begin{equation}\label{eq:norm}
\|w\|^2_{W^2}=\|\d_t^2 w\|^2_{W^0}.
\end{equation}
Note that it is equivalent to the $W^2$-norm  introduced by (\ref{eq:W}).

To prove the continuity of the mapping $u\in V^2\to F(u)\in W^2$,
fix an arbitrary $u\in V^2$.
On the account of the expression (\ref{eq::11primed}) for $\d^2_tF(u)$
 and the estimates (\ref{eq:Du}) and (\ref{eq:D2u})
with $u^m$ replaced by $u+v$, we derive the following inequality
for all $v\in V^2$  with $\|v\|_{V^2}\le K/C_0$, where the constant $C_0$
is fixed to satisfy (\ref{eq:C0}) and $K$ is determined by (\ref{eq:K}):
\begin{eqnarray*}
\lefteqn{
\frac12\normed{\d^2_tF(u+v)(x,t)-\d^2_tF(u)(x,t)}^2_{W^0}\le
}\\[2mm]&&
\normed{\d^3_uf}^2_{L^\infty\left((0,1)\times(-3K,3K)\right)}
\normed{\d_t(u+v)}^2_{L^2(0,1;L^\infty(0,2\pi))}
\\[2mm]&&
\times
\normed{\d_t(u+v)}^2_{L^2(0,2\pi;L^\infty(0,1))}\normed{v}^2_{C([0,1]\times[0,2\pi])}
\\[2mm]&&
+\normed{\d^2_uf}^2_{L^\infty\left((0,1)\times(-K,K)\right)}
\normed{\d_t(2u+v)}^2_{L^2(0,2\pi;L^\infty(0,1))}\normed{\d_tv}^2_{L^2(0,1;L^\infty(0,2\pi))}
\\[2mm]&&
+\normed{\d^2_uf}^2_{L^\infty\left((0,1)\times(-3K,3K)\right)}
\normed{\d^2_t(u+v)}^2_{W^0}\normed{v}^2_{C([0,1]\times[0,2\pi])}
\\[2mm]&&
+\normed{\d_uf}^2_{L^\infty\left((0,1)\times(-K,K)\right)}
\normed{\d^2_tv}^2_{W^0}
\le C\normed{v}^2_{V^2},
\end{eqnarray*}
the constant $C$ being dependent on  $f$ and $u$, but not  on $v$. We conclude that
$$
\normed{\d^2_tF(u+v)(x,t)-\d^2_tF(u)(x,t)}^2_{W^0}=O(\normed{v}_{V^2}^2)
$$
as $\normed{v}_{V^2}\to 0$. The continuity of $F$ is therefore proved.

Let us now show that the operator $u\to F(u)$ is continuously differentiable.
Fix $u\in V^2$ and introduce the bounded linear operator
$G\,:\,V^2\to W^2$ defined by the formula
$$
[G(u)v](x,t)=(\d_uf)(x,u(x,t))v(x,t).
$$
From the smoothness assumptions on $f$ and the proof of Lemma~\ref{lem:cont}
it follows that $(\d_uf)(x,u(x,t))\in W^2$. Since  $V^2\hookrightarrow W^2$
continuously, $W^2$ is an algebra of functions, and $v\in V^2$,
the correctness of the definition of the operator $G$ is straightforward.

Our next concern is to show that $F$ is differentiable in $u$ and that
$F'(u)=G(u)$. Similarly to the above, fix $u\in V^2$ and consider
 $w\in V^2$ with
$\|w\|_{V^2}\le K/C_0$, where $C_0$ is a certain constant satisfying (\ref{eq:C0}) and
$K$ is specified by (\ref{eq:K}). It follows  by (\ref{eq:C0}) that $\normed{w}_{C([0,1]\times[0,2\pi])}\le K$.
The desired assertion now follows from the following estimate:
\begin{eqnarray*}
\lefteqn{
\left\|F(u+w)(x,t)-F(u)(x,t)-[G(u)w](x,t)\right\|_{W^2}}\\
&&=\left\|f(x,u+w)-f(x,u)-(\d_uf)(x,u)w\right\|_{W^2}\\
&&=\left\|w\int\limits_0^1
\left[
(\d_uf)(x,u+\sigma w)-(\d_uf)(x,u)
\right]\,d\sigma\right\|_{W^2}\\
&&=\left\|w^2\int\limits_0^1\int\limits_0^1\sigma
(\d_u^2f)(x,u+\sigma\sigma_1w)\,d\sigma d\sigma_1\right\|_{W^2}\\
&&=\left\|\d_t^2\left[w^2\int\limits_0^1\int\limits_0^1\sigma
(\d_u^2f)(x,u+\sigma\sigma_1w)\,d\sigma d\sigma_1\right]\right\|_{W^0}\\
&&=\bigg\|2(w\d_t^2w+(\d_tw)^2)\int\limits_0^1\int\limits_0^1\sigma
(\d_u^2f)(x,u+\sigma\sigma_1w)\,d\sigma d\sigma_1\\
&&+w^2\int\limits_0^1\int\limits_0^1\sigma
(\d_u^4f)(x,u+\sigma\sigma_1w)
\left[\d_t u+\sigma\sigma_1\d_t w\right]^2\,d\sigma d\sigma_1\\
&&+w^2\int\limits_0^1\int\limits_0^1\sigma
(\d_u^3f)(x,u+\sigma\sigma_1w)
\left[\d_t^2 u+\sigma\sigma_1\d_t^2 w\right]\,d\sigma d\sigma_1\\
&&+2w\d_tw\int\limits_0^1\int\limits_0^1\sigma
(\d_u^3f)(x,u+\sigma\sigma_1w)
\left[\d_t u+\sigma\sigma_1\d_t w\right]\,d\sigma d\sigma_1\bigg\|_{W^0}\\
&&
\le
4\left(\|w\|_{L^4}^2+\|\d_tw\|_{L^4}^2+\|w\|_{V^2}\|w\|_{C}\right)\normed{f(\cdot,\cdot)}_{L^\infty\left((0,1),C^4(-3K,3K)\right)}\\
&&
\times\bigg(
1+\normed{u}_{W^2}+\normed{\d_tu}_{L^4\left((0,1)\times(0,2\pi)\right)}^2
+\normed{w}_{W^2}
+\normed{w}_{L^4\left((0,1)\times(0,2\pi)\right)}^2
\bigg).
\end{eqnarray*}
In the last inequality we again used Lemma~\ref{lem:embedding} and the embedding (\ref{eq:emb1}).
The continuous differentiability of $F$ is proved, which completes the proof of the lemma.
\end{proof}

\end{document}